\newtheorem{theorem}{Theorem}[section]
\newtheorem{corollary}[theorem]{Corollary}
\newtheorem{lemma}[theorem]{Lemma}
\newtheorem{proposition}[theorem]{Proposition}
\theoremstyle{definition}
\newtheorem{definition}[theorem]{Definition}
\theoremstyle{remark}
\newcommand{\OF}{\widetilde{F}}
\newcommand{\F}{F}
\newcommand{\Z}{\mathbb{Z}}
\newcommand{\R}{\mathbb{R}}
\begin{document}

\title{A remark on configuration spaces of two points} 
\author{George Raptis}
\address{\newline
G. Raptis \newline
Universit\"{a}t Regensburg, 
Fakult\"{a}t f\"{u}r Mathematik \newline
93040 Regensburg, Germany}
\email{georgios.raptis@ur.de}
\author{Paolo Salvatore}
\address{\newline
P. Salvatore \newline
Dipartimento di Matematica, 
Universit\`{a} di Roma Tor Vergata \newline
Via della Ricerca Scientifica, 
00133 Roma, Italy}
\email{salvator@mat.uniroma2.it}
\begin{abstract}
We prove a homotopy invariance result for a certain covering space of the space of ordered configurations 
of two points in $M \times X$ where $M$ is a closed smooth manifold and $X$ is any fixed aspherical space 
which is not a point. 
\end{abstract}

\maketitle

\section{Introduction}

It is known that the homotopy type of the ordered configuration space $F_2(M)$ of two distinct points in a closed manifold $M$ is not determined by the homotopy type of $M$. Longoni 
and the second-named author found a counterexample to this homotopy invariance problem in \cite{LS}. The counterexample is given by the pair of homotopy equivalent 3-dimensional 
lens spaces $L_{7,1}$ and $L_{7,2}$. In this case, it turns out that the universal covering spaces of $F_2(L_{7,1})$ and $F_2(L_{7,2})$ 
are also not homotopy equivalent. More pairs of lens spaces have been examined by Evans-Lee \cite{Lee}, providing evidence for the conjecture that any pair of non-homeomorphic 
lens spaces gives a counterexample. On the other hand, there is a positive result by Levitt \cite{Le} who proved the homotopy invariance of $F_2(M)$ when $M$ is 2-connected.

The nature of the counterexample suggested the modified question of the homotopy invariance of $F_2(M)$ with respect to the {\em simple} homotopy type of $M$. This question remains open. An easy way of producing 
simple-homotopy equivalent manifolds is by taking product with $S^1$: the product property of the Whitehead torsion shows that a homotopy equivalence $f: M \stackrel{\simeq}{\to} N$ 
yields a simple-homotopy equivalence  $f \times \mathrm{id}: M \times S^1 \stackrel{\simeq_s}{\to} N \times S^1$. In this paper, we consider the space $\F_2^{S^1}(M \times S^1)$ of pairs of points in $M \times \R$ which lie in distinct $\Z$-orbits. This 
defines a $(\Z \times \Z)$-covering space over $F_2(M \times S^1)$. A special case of our main result is that the homotopy type of this space is a homotopy invariant of $M$.

More generally, if $X$ is a fixed aspherical space which is \emph{not} the one-point space, then the homotopy type of a certain covering space of $\F_2(M \times X)$ is 
homotopy invariant in $M$ (Theorem \ref{main}). If $X$ is also contractible, this implies the homotopy invariance of $F_2(M \times X)$ (Corollary \ref{cor}). These 
statements are false, of course, when $F_2(X) = \varnothing$, i.e., when $X$ consists of a single point. The proof of Theorem \ref{main} uses a description of the covering space 
of $\F_2(M \times X)$ as a homotopy pushout (Proposition \ref{htpy}) and the fiber homotopy invariance of the spherical tangent bundle of a closed smooth manifold \cite{STM}.

\section{Configurations of two points in a product of spaces}

Throughout this section, $M$ is a closed smooth manifold and $X$ is a path-connected Hausdorff 
space with a basepoint $x \in X$ and a universal covering $p: \widetilde{X} \to X$.

\subsection{Preliminaries.} The configuration space $\F_2(M) \subset M \times M$ consists of ordered pairs of distinct points in $M$, i.e., 
$$\F_2(M) = \{(m_1, m_2) \in M^2 \ | \ \ m_1 \neq m_2 \}.$$

\noindent Assume that $M$ has a Riemannian metric $d$. For $\epsilon > 0$, we consider the following open subspaces of $M \times M$, 
$$F_2(M)_{\epsilon} : \ = \{(m_1, m_2) \in F_2(M) \ | \  d(m_1, m_2) < \epsilon \}$$ 
and
$$DT(M)_{\epsilon} : \ = \{(m_1, m_2) \in M \times M \ | \  d(m_1, m_2) < \epsilon\}.$$
There is a (homotopy) pushout square
\begin{equation} \label{decomp_MxM}
\xymatrix{
F_2(M)_{\epsilon} \ar[r] \ar[d] & \F_2(M) \ar[d] \\
DT(M)_{\epsilon} \ar[r] & M \times M
}
\end{equation}
For $\epsilon$ small enough, the projection $F_2(M)_{\epsilon} \to M$, $(m_1, m_2) \mapsto m_1$, is homotopy equivalent, fiberwise over $M$, to the spherical tangent 
bundle of $M$ (see also \cite{Le}). The fiber homotopy type of the spherical tangent bundle of $M$ depends only on the homotopy type of $M$ by results of \cite{STM, Du}.  
On the other hand, for $\epsilon$ small, the corresponding projection $DT(M)_{\epsilon} \to M$ is a homotopy equivalence and fiber homotopy equivalent to the disk tangent bundle of $M$. 

\subsection{Orbit 2-configurations in a product.}

Let $G: = \pi_1(X, x) \times \pi_1(X, x)$. We consider the following covering space of the configuration space of two points in $M \times X$.

\begin{definition}
The \textit{$X$-orbit configuration space} $F_2^X(M \times X)$ of two points in $M \times X$ is the covering space of $F_2(M \times X)$ defined by 
$$\F_2^X(M \times X) : \ = \{\big( (m_1, x_1), (m_2, x_2) \big) \in \F_2(M \times \widetilde{X}) \ \arrowvert \ (m_1, p(x_1)) \neq (m_2, p(x_2)) \}.$$
\end{definition}

The space $\F_2^X(M \times X)$ admits a natural free action by the group $G$ and the quotient is the configuration space $\F_2(M \times X)$. 
For $M = \ast$, the space $F_2^X(M \times X)$ is the standard orbit configuration space of $\widetilde{X}$, denoted $\widetilde{\F}_2(X)$. There is a pushout 
square 
\begin{equation} \label{key_pushout}
\xymatrix{
\F_2(M) \times \OF_2(X) \ar[r] \ar[d] & \F_2(M) \times \widetilde{ X }^2 \ar[d] \\
M^2 \times \widetilde{\F}_2(X) \ar[r] & \F_2^X(M \times X)
}
\end{equation}
where the maps are the obvious open inclusions. These maps respect the respective actions of $G$ and there is an induced pushout square
\[
\xymatrix{
\F_2(M) \times \F_2(X) \ar[r] \ar[d] & \F_2(M) \times X^2 \ar[d] \\
M^2 \times \F_2(X) \ar[r] & \F_2(M \times X)
}
\]
Combining the pushout decompositions in \eqref{decomp_MxM} and \eqref{key_pushout}, we obtain the following

\begin{lemma} \label{decomp}
There is a $G$-equivariant homeomorphism
\newline
$$ (DT(M)_{\epsilon} \times \OF_2(X)) \bigcup_{F_2(M)_{\epsilon} \times \OF_2(X)} (\F_2(M) \times \widetilde{X}^2) \stackrel{\cong}{\longrightarrow} \F^X_2(M \times X).$$
\end{lemma}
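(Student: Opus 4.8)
The plan is to assemble the desired $G$-equivariant homeomorphism directly from the two pushout squares \eqref{decomp_MxM} and \eqref{key_pushout}, treating the first as a statement purely about the factor $M^2 = M \times M$ and the second as the statement that glues in the $X$-data. Since all maps in \eqref{decomp_MxM} are open inclusions of subspaces of $M^2$ and similarly all maps in \eqref{key_pushout} are open inclusions, the colimits involved are genuine (not merely homotopy) pushouts, computed as unions of open subsets, and this is what makes an honest homeomorphism available rather than just a homotopy equivalence.

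First I would record that \eqref{decomp_MxM} exhibits $\F_2(M)$ as the union of its open subsets $F_2(M)_\epsilon$ and (the image of) $\F_2(M)$ inside... more precisely, it exhibits $M \times M$ as the pushout of $DT(M)_\epsilon \leftarrow F_2(M)_\epsilon \rightarrow \F_2(M)$, since $M^2 = DT(M)_\epsilon \cup \F_2(M)$ with intersection $F_2(M)_\epsilon$. Next I would take the product of this open-cover decomposition with $\widetilde X^2$; because products with a fixed space preserve pushouts of open inclusions, $M^2 \times \widetilde X^2$ is the pushout of $DT(M)_\epsilon \times \widetilde X^2 \leftarrow F_2(M)_\epsilon \times \widetilde X^2 \rightarrow \F_2(M) \times \widetilde X^2$. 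I would similarly restrict to the open subset $\F_2(M) \times \OF_2(X)$ lying over $F_2(M)_\epsilon$. The point is then to paste this with \eqref{key_pushout}: in the big pushout \eqref{key_pushout} the right-hand top corner $\F_2(M) \times \widetilde X^2$ itself decomposes, over $M^2$, into the pushout of $DT(M)_\epsilon \times \OF_2(X) \leftarrow F_2(M)_\epsilon \times \OF_2(X) \rightarrow \F_2(M) \times \widetilde X^2$ — wait, that is not quite right either, so the careful bookkeeping is to intersect each open piece of \eqref{decomp_MxM}$\times \widetilde X^2$ with the relevant open pieces of \eqref{key_pushout} and invoke the general fact that a pushout of pushouts along compatible open inclusions is again a pushout (a "pasting" lemma for open covers). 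Concretely: $\F^X_2(M\times X)$ is, by \eqref{key_pushout}, the union of the open sets $M^2 \times \widetilde\F_2(X)$ and $\F_2(M) \times \widetilde X^2$ with intersection $\F_2(M) \times \OF_2(X)$; and $M^2 \times \widetilde\F_2(X)$ is the union of $DT(M)_\epsilon \times \widetilde\F_2(X)$ and $\F_2(M) \times \widetilde\F_2(X)$ with intersection $F_2(M)_\epsilon \times \widetilde\F_2(X)$; chasing through, one finds $\F^X_2(M\times X)$ is the union of the two open sets $DT(M)_\epsilon \times \OF_2(X)$ and $\F_2(M) \times \widetilde X^2$ whose intersection is $F_2(M)_\epsilon \times \OF_2(X)$, which is exactly the asserted pushout.

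Having identified the underlying set-theoretic pushout, I would check that the bijection so obtained is a homeomorphism: each of $DT(M)_\epsilon \times \OF_2(X)$, $\F_2(M)\times\widetilde X^2$, and their intersection is an \emph{open} subspace of $\F^X_2(M\times X) \subset \F_2(M\times\widetilde X)$, the two open sets cover, and a map out of a pushout of open inclusions is continuous iff its restrictions to the two pieces are — so continuity and openness of the comparison map are automatic. Finally, $G = \pi_1(X,x)^2$ acts on $\widetilde X^2$, hence diagonally on each piece ($M^2$ and the $M$-factors carrying the trivial action), and all the inclusions in \eqref{decomp_MxM}$\times\widetilde X^2$ and in \eqref{key_pushout} are $G$-equivariant by construction; since the homeomorphism is induced by these equivariant inclusions it is $G$-equivariant. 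I do not expect a genuine obstacle here — the one point demanding care is the "pasting of pushouts" bookkeeping, i.e. verifying that the iterated unions of open sets really do produce the two-object pushout in the statement rather than some larger diagram; this is a routine but slightly fiddly diagram chase that I would carry out by writing each space as an explicit union of open subsets of $\F_2(M\times\widetilde X)$ and comparing.
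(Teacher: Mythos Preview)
Your proposal is correct and takes essentially the same approach as the paper: the paper writes the two-square diagram whose left square is \eqref{decomp_MxM}$\times\OF_2(X)$ and whose right square is \eqref{key_pushout}, and then invokes the pasting lemma for pushouts of $G$-equivariant open inclusions. Your ``concretely'' paragraph is exactly this argument unwound at the level of unions of open subsets.
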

\begin{proof}
This follows easily from the diagram of $G$-equivariant maps
\[
\xymatrix{
F_2(M)_{\epsilon} \times \OF_2(X) \ar[r] \ar[d] & \F_2(M) \times \OF_2(X) \ar[r] \ar[d] & \F_2(M) \times \widetilde{X}^2 \ar[d] \\
DT(M)_{\epsilon} \times \OF_2(X) \ar[r] & M^2 \times \OF_2(X) \ar[r] & \F_2^X(M \times X)
}
\]
Since both squares are pushouts, so is also the composite square. 
\end{proof}

\begin{corollary} \label{eq-htpy}
Assume that $F_2(X)$ is non-empty (i.e., $X$ has at least two points). Then there is a pushout of $G$-equivariant maps
\[
\xymatrix{
(F_2(M)_{\epsilon} \times \OF_2(X)) \bigcup_{F_2(M)_{\epsilon} \times G} (DT(M)_{\epsilon} \times G)
\ar[r] \ar[d] & DT(M)_{\epsilon} \times \OF_2(X) \ar[d] \\
 (\F_2(M) \times \widetilde{X}^2) \bigcup_{F_2(M)_{\epsilon} \times G} (DT(M)_{\epsilon} \times G)  \ar[r] 
 & \F_2^X(M \times X)
}
\] 
\end{corollary}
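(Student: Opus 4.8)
The plan is to deduce the square formally from Lemma~\ref{decomp}, realizing it as the pushout appearing there after an elementary-expansion move that absorbs an auxiliary copy of $DT(M)_{\epsilon}\times G$ sitting inside $DT(M)_{\epsilon}\times\OF_2(X)$. The move rests on a single ingredient: a $G$-equivariant map $\iota\colon G\to\OF_2(X)$. Since $F_2(X)\neq\varnothing$ I may choose distinct points $a\neq b$ of $X$ and lifts $\tilde a,\tilde b\in\widetilde{X}$; then $p(\tilde a)\neq p(\tilde b)$, so the entire $G$-orbit of $(\tilde a,\tilde b)$ lies in $\OF_2(X)$, and $g\mapsto g\cdot(\tilde a,\tilde b)$ is such an $\iota$ (equivariant for the left-translation action on $G$, and injective since $\widetilde{X}$ is simply connected). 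This is precisely where the hypothesis is used: for $X$ a point no such $\iota$ exists.

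Next I would assemble the square. Write $A=F_2(M)_{\epsilon}\times\OF_2(X)$, $B=DT(M)_{\epsilon}\times\OF_2(X)$, $C=\F_2(M)\times\widetilde{X}^2$, $D=DT(M)_{\epsilon}\times G$ and $E=F_2(M)_{\epsilon}\times G$. Taking products of $\iota$, the open inclusions $F_2(M)_{\epsilon}\hookrightarrow DT(M)_{\epsilon}$, $F_2(M)_{\epsilon}\hookrightarrow\F_2(M)$, $\OF_2(X)\hookrightarrow\widetilde{X}^2$ and identity maps produces $G$-equivariant maps $E\to A$, $E\to D$, $D\to B$ and $E\to C$ (the last being the composite $E\to A\to C$). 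The left-hand column of the desired square is then $A':=A\cup_E D$ and $C':=C\cup_E D$, while $A'\to B$ is built from $A\hookrightarrow B$ and $D\to B$; $A'\to C'$ from $A\to C\hookrightarrow C'$ and $D\hookrightarrow C'$; and $C'\to\F_2^X(M\times X)$ from the structure map $C\to\F_2^X(M\times X)$ of Lemma~\ref{decomp} and $D\to B\to\F_2^X(M\times X)$. Everything is $G$-equivariant by construction, and the only commutativity that is not formal, namely that $E\to D\to B\to\F_2^X(M\times X)$ and $E\to C\to\F_2^X(M\times X)$ agree, reduces to the pushout relation of Lemma~\ref{decomp} once one notes that $E\to D\to B$ equals $E\to A\to B$.

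Finally I would identify the pushout. By Lemma~\ref{decomp}, $\F_2^X(M\times X)\cong B\cup_A C$. The key point is that the square with corners $A,A',C,C'$ --- the left column of the desired square, together with $A\hookrightarrow A'$ and the canonical map $C\to C'$ --- is itself a pushout, since $C\cup_A A'=C\cup_A(A\cup_E D)=C\cup_E D=C'$ (using that $E\to A\to C$ is the chosen map $E\to C$). Pasting this pushout square in front of the defining pushout square of $B\cup_{A'}C'$ exhibits the resulting outer rectangle as the pushout of $B\leftarrow A\to C$, the comparison map $A\to A'\to B$ being simply the inclusion $A\hookrightarrow B$; hence $B\cup_{A'}C'\cong B\cup_A C\cong\F_2^X(M\times X)$, and a routine check identifies the four structure maps with those in the statement. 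All identifications are $G$-equivariant.

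Conceptually the statement is thin --- it is a formal consequence of Lemma~\ref{decomp} and the existence of $\iota$ --- so I do not anticipate a genuine obstacle; moreover, since the argument manipulates strict colimits throughout and the pasting lemma for pushouts needs no cofibrancy, it yields the pushout in the literal sense asserted. The only real work is the bookkeeping: checking that the various structure maps are well defined, commute and are $G$-equivariant, and applying the pasting lemma correctly in the last step. That modest verification is where I expect whatever difficulty there is to reside.
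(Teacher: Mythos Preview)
Your proposal is correct and follows essentially the same route as the paper: both introduce the $G$-equivariant orbit map $G\to\OF_2(X)$, form the auxiliary pushouts $A'=A\cup_E D$ and $C'=C\cup_E D$, observe that the square $A,A',C,C'$ is a pushout, and then combine this with Lemma~\ref{decomp} via the pasting lemma. The only cosmetic difference is that the paper invokes the cancellation form of pasting directly (left square and outer rectangle are pushouts, hence so is the right square), whereas you run pasting in the forward direction and appeal to uniqueness of pushouts; your explicit verification that $C\cup_A A'\cong C'$ fills in a step the paper leaves as ``by definition''.
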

\begin{proof}
Let $q: G \to \OF_2(X)$ be the inclusion of an orbit where $G$ is regarded as a discrete topological group. Consider the following diagram:
\begin{small}
\[
\xymatrix{
F_2(M)_{\epsilon} \times G \ar[d]^{\mathrm{id} \times q} \ar[r] & DT(M)_{\epsilon} \times  G \ar[d] \ar[dr]^{\mathrm{id} \times q} & \\
F_2(M)_{\epsilon} \times \OF_2(X) \ar[r] \ar[d] & (F_2(M)_{\epsilon} \times \OF_2(X)) \cup_{F_2(M)_{\epsilon} \times G} (DT(M)_{\epsilon} \times G) \ar[d] \ar@{-->}[r] & DT(M)_{\epsilon} 
\times \OF_2(X) \ar[d] \\
\F_2(M) \times \widetilde{X}^2 \ar[r] & (\F_2(M) \times \widetilde{X}^2)\cup_{F_2(M)_{\epsilon} \times G} (DT(M)_{\epsilon} \times G)  \ar@{-->}[r] & \F_2^X(M \times X)
}\]\end{small}

\noindent Note that all of the maps respect the corresponding $G$-actions. The squares on the left are pushouts by definition. The composite bottom square 
is a pushout by Lemma \ref{decomp}. Therefore the bottom right square is also a pushout, as required.
\end{proof}

\subsection{Homotopy Invariance.}

The somewhat complicated diagram in Corollary \ref{eq-htpy} can be simplified at the expense of losing $G$-equivariance. First, let $\epsilon > 0$ be small enough so that
the closed inclusion of the subspace of $F_2(M)_{\epsilon}$ which consists of those pairs of points which are exactly $(\epsilon/2)$-apart,
$$ST(M) : \ = \{(m_1, m_2) \in M \times M \ |  \ d(m_1, m_2) = \epsilon/2 \} \stackrel{j}{\hookrightarrow} F_2(M)_{\epsilon},$$  
is a homotopy equivalence, the projection $ST(M) \to M$, $(m_1, m_2) \mapsto m_1$, is fiber homotopy equivalent to 
the spherical tangent bundle of $M$, and the projection $DT(M)_{\epsilon} \to M$ is a homotopy equivalence. We denote $DT(M) : = \ DT(M)_{\epsilon}$.

We obtain the following homotopy pushout decomposition of $\F_2^X(M \times X)$. Here \textit{homotopy pushout} is always considered with respect to the weak homotopy 
equivalences. 

\begin{proposition} \label{htpy}
Suppose that $\widetilde{X}$ is weakly contractible and $F_2(X) \neq \varnothing$ (i.e, $X$ has at least two points). Let $q: \ast \to \OF_2(X)$ be the inclusion of a point. Then the 
space $\F_2^X(M \times X)$ is weakly equivalent to the homotopy pushout of the maps
\[
 M^2 \longleftarrow (ST(M) \times \OF_2(X)) \bigcup_{ST(M) \times \{\ast\}} DT(M) \times \{\ast\} \longrightarrow DT(M) \times \OF_2(X)
\]
which are defined by the projection away from $\OF_2(X)$ and the point $q$.
\end{proposition}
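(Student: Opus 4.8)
The plan is to extract from Lemma~\ref{decomp} an explicit homotopy pushout presentation of $\F_2^X(M\times X)$ in which $G$ no longer appears, to rewrite the homotopy pushout appearing in the statement as a homotopy colimit over a five–object poset, and to compare the two by a cofinality argument for homotopy colimits. No input beyond Lemma~\ref{decomp}, the homotopy pushout \eqref{decomp_MxM}, the homotopy equivalence $ST(M)\hookrightarrow F_2(M)_\epsilon$ and the weak contractibility of $\widetilde X$ is needed; in particular the spherical tangent bundle results are not used here. \textit{Step 1.} Inside $\F_2^X(M\times X)$ the subspaces $DT(M)_\epsilon\times\OF_2(X)$ and $\F_2(M)\times\widetilde X^2$ are open, cover it, and meet in $F_2(M)_\epsilon\times\OF_2(X)$; hence by Lemma~\ref{decomp} (together with the standard fact that an open cover yields a homotopy cocartesian square) the square
\[
\xymatrix{
F_2(M)_\epsilon\times\OF_2(X) \ar[r]\ar[d] & \F_2(M)\times\widetilde X^2 \ar[d]\\
DT(M)_\epsilon\times\OF_2(X) \ar[r] & \F_2^X(M\times X)
}
\]
is a homotopy pushout. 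Since $\widetilde X$, hence $\widetilde X^2$, is weakly contractible, the projection $\F_2(M)\times\widetilde X^2\to\F_2(M)$ is a weak equivalence, and transporting the top map along it shows that $\F_2^X(M\times X)$ is weakly equivalent to the homotopy pushout $H$ of
\[
\F_2(M)\ \xleftarrow{\ \iota\circ\mathrm{pr}\ }\ F_2(M)_\epsilon\times\OF_2(X)\ \xrightarrow{\ \iota'\times\mathrm{id}\ }\ DT(M)_\epsilon\times\OF_2(X),
\]
where $\iota\colon F_2(M)_\epsilon\hookrightarrow\F_2(M)$, $\iota'\colon F_2(M)_\epsilon\hookrightarrow DT(M)_\epsilon$ are the inclusions and $\mathrm{pr}$ is the projection onto $F_2(M)_\epsilon$. (The hypothesis $F_2(X)\neq\varnothing$ guarantees $\OF_2(X)\neq\varnothing$ and gives meaning to the point $q$.)

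\textit{Step 2.} Next I would rewrite the homotopy pushout in the statement. The homotopy equivalence $j\colon ST(M)\hookrightarrow F_2(M)_\epsilon$ is compatible with all the structure maps in sight (note that $ST(M)\hookrightarrow DT(M)_\epsilon$ equals $\iota'\circ j$), so $A'=(ST(M)\times\OF_2(X))\cup_{ST(M)\times\{\ast\}}(DT(M)\times\{\ast\})$ is weakly equivalent to the homotopy pushout of $F_2(M)_\epsilon\times\OF_2(X)\xleftarrow{\mathrm{id}\times\ast}F_2(M)_\epsilon\xrightarrow{\iota'}DT(M)_\epsilon$, while by \eqref{decomp_MxM} $M^2$ is the homotopy pushout of $DT(M)_\epsilon\xleftarrow{\iota'}F_2(M)_\epsilon\xrightarrow{\iota}\F_2(M)$. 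Substituting these two homotopy pushouts into $\mathrm{hocolim}\big(M^2\leftarrow A'\to DT(M)_\epsilon\times\OF_2(X)\big)$ — and observing that $A'$ and $M^2$ are assembled from the \emph{same} span $F_2(M)_\epsilon\xrightarrow{\iota'}DT(M)_\epsilon$ — exhibits that homotopy pushout as $\mathrm{hocolim}_{\mathcal F}F$ for the poset $\mathcal F$ with objects $F_2(M)_\epsilon$, $DT(M)_\epsilon$, $F_2(M)_\epsilon\times\OF_2(X)$, $\F_2(M)$, $DT(M)_\epsilon\times\OF_2(X)$, in which $F_2(M)_\epsilon$ is the least element, $\F_2(M)$ and $DT(M)_\epsilon\times\OF_2(X)$ are maximal, $F_2(M)_\epsilon\times\OF_2(X)$ lies below both $\F_2(M)$ and $DT(M)_\epsilon\times\OF_2(X)$, $DT(M)_\epsilon$ lies below $DT(M)_\epsilon\times\OF_2(X)$ only, and $F$ carries these relations to the maps $\mathrm{id}\times\ast$, $\iota'$, $\iota\circ\mathrm{pr}$, $\iota'\times\mathrm{id}$, $\mathrm{id}\times\ast$ respectively. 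The full subposet $\mathcal G\subset\mathcal F$ on $\{F_2(M)_\epsilon\times\OF_2(X),\,\F_2(M),\,DT(M)_\epsilon\times\OF_2(X)\}$ is exactly the indexing span of $H$, so $\mathrm{hocolim}_{\mathcal G}(F|_{\mathcal G})\simeq H$.

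\textit{Step 3.} It remains to prove that $\mathcal G\hookrightarrow\mathcal F$ is homotopy cofinal, i.e.\ that for every object $x$ of $\mathcal F$ the comma category $x\downarrow\mathcal G$ has weakly contractible nerve; by the cofinality theorem for homotopy colimits this gives $\mathrm{hocolim}_{\mathcal F}F\simeq\mathrm{hocolim}_{\mathcal G}(F|_{\mathcal G})\simeq H\simeq\F_2^X(M\times X)$, which is the assertion. This is a direct verification: for $x$ an object of $\mathcal G$, or for $x=DT(M)_\epsilon$, the category $x\downarrow\mathcal G$ has an initial object (and is a one-point category for the two maximal objects); for $x=F_2(M)_\epsilon$ the pair $\big(F_2(M)_\epsilon\times\OF_2(X),\,\mathrm{id}\times\ast\big)$ is initial in $x\downarrow\mathcal G$, where one uses $\mathrm{pr}\circ(\mathrm{id}\times\ast)=\mathrm{id}$ to see that the resulting triangle to $\F_2(M)$ commutes. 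Hence every $x\downarrow\mathcal G$ is contractible and the argument closes.

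I expect the main obstacle to be organizational rather than conceptual: one must pin down the functor $F$ on $\mathcal F$ precisely enough to be certain that $\mathcal G$ really is a full subposet carrying the span defining $H$, and to check the five comma categories; and the two homotopy-colimit substitutions in Step~2 — each an instance of the principle that a homotopy pushout one of whose corners is itself a homotopy pushout is the homotopy colimit over the amalgamated poset — should be applied carefully, since it is precisely the appearance of $\F_2(M)$ (and not of $M^2$) as a separate object of $\mathcal F$ that makes $\mathcal G$ a subposet and the cofinality work.
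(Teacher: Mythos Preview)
Your argument is correct, but it takes a genuinely different route from the paper's. The paper proceeds by the pasting law for homotopy pushouts: starting from the square of Lemma~\ref{decomp}, it attaches $DT(M)$ along the cofibration $ST(M)\hookrightarrow F_2(M)_\epsilon\times\{\ast\}$ to each term, obtaining a new homotopy pushout square whose bottom-right corner is still $F_2^X(M\times X)$; the remaining corners $(F_2(M)_\epsilon\times\OF_2(X))\cup_{ST(M)}DT(M)$ and $(\F_2(M)\times\widetilde X^2)\cup_{ST(M)}DT(M)$ are then identified, via $j$ and weak contractibility of $\widetilde X$ together with \eqref{decomp_MxM}, with $(ST(M)\times\OF_2(X))\cup_{ST(M)}DT(M)$ and $M^2$ respectively. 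By contrast, you first collapse $\widetilde X^2$ to a point to get the span $H$, then recast the target homotopy pushout as a homotopy colimit over a five-object poset $\mathcal F$ (via an iterated Fubini/left Kan extension argument), and finish with a homotopy cofinality check for the three-object subposet $\mathcal G$ indexing $H$. The paper's approach is more elementary---it uses nothing beyond the pushout pasting lemma and never names a poset---whereas yours invokes the cofinality theorem for homotopy colimits; on the other hand, your method makes the bookkeeping of which maps go where entirely explicit, and the single cofinality verification in Step~3 is very clean. The one point in your Step~2 that deserves to be spelled out is that, after the two substitutions, the comma category $p\downarrow L$ computing $(Lp_!F)(L)$ is the four-object poset $\{a,b,c,d\}$ rather than just the span $\{a,b,d\}$ for $M^2$; you need the (easy) extra cofinality of $\{a,b,d\}\hookrightarrow\{a,b,c,d\}$ to recover $M^2$, which you allude to but do not state explicitly.
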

\begin{proof}
The proof is similar to that of Corollary \ref{eq-htpy}.  Consider the following commutative diagram 
\[
\xymatrix{
ST(M) \times  \ast \ar[d]^{j \times q} \ar[r] & DT(M) \times  \ast \ar[d] \ar[dr]^{\mathrm{id} \times q} & \\
F_2(M)_{\epsilon} \times \OF_2(X) \ar[r] \ar[d] & F_2(M)_{\epsilon} \times \OF_2(X) \cup_{ST(M)} \ DT(M)
\ar@{-->}[r] \ar[d] & DT(M) \times \OF_2(X) \ar[d] \\
\F_2(M) \times \widetilde{X}^2\ar[r] & \F_2(M) \times \widetilde{X}^2 \cup_{ST(M)} \ DT(M) \ar@{-->}[r] & \F_2^X(M \times X)
}
\]
The two squares on the left are pushouts by definition. The top map is a cofibration, therefore they are also homotopy pushouts 
(see, e.g., \cite[Appendix, Proposition 4.8]{BV}). The bottom composite square is a pushout by Lemma \ref{decomp}. This pushout decomposition 
of $F_2^X(M \times X)$ arises from an open covering defined by two open subsets and therefore it defines a homotopy pushout (see also \cite{DI}
for more general results). It follows that the bottom right square is also a homotopy pushout. 

There is an obvious commutative diagram 
\[
\xymatrix{
DT(M) \times \OF_2(X)  \ar@{=}[r] & DT(M) \times \OF_2(X) \\
(\F_2(M)_{\epsilon} \times \OF_2(X)) \cup_{ST(M)} DT(M) \ar@{=}[r] \ar[d] \ar[u] & (F_2(M)_{\epsilon} \times \OF_2(X)) \cup_{ST(M)} DT(M) \ar[d] \ar[u] \\
\F_2(M) \times \widetilde{X}^2 \cup_{ST(M)} DT(M) \ar[r]^{\sim} & M^2 
 }
\]
where the bottom map is a weak homotopy equivalence using that $\widetilde{X} \to \ast$ is a weak homotopy equivalence, $ST(M) \stackrel{j}{\simeq} \F_2(M)_{\epsilon}$ is a homotopy equivalence, 
and the homotopy pushout in Diagram \eqref{decomp_MxM}. Therefore the homotopy pushouts of the vertical pairs of maps are weakly homotopy equivalent. Similarly, 
they can be identified with the required homotopy pushout using the homotopy equivalence $ST(M) \stackrel{j}{\simeq} \F_2(M)_{\epsilon}$ and the fact that $ST(M) \subset
DT(M)$ is a cofibration. 
\end{proof}

\begin{theorem} \label{main}
Suppose that $X$ has a weakly contractible universal covering space and $\F_2(X) \neq \varnothing$ (i.e., $X$ has at least two points). If $M$ and $N$ are homotopy equivalent 
closed smooth manifolds, then the spaces $\F_2^X(M \times X)$ and $\F_2^X(N \times X)$ are weakly homotopy equivalent.
\end{theorem}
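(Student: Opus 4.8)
The plan is to reduce the theorem to the homotopy pushout description of $\F_2^X(M \times X)$ provided by Proposition \ref{htpy}, and then to observe that every vertex and every arrow in that pushout diagram is determined, up to the appropriate homotopy-theoretic data, by the homotopy type of $M$. Fix a homotopy equivalence $f \colon M \stackrel{\simeq}{\to} N$. By Proposition \ref{htpy}, applied to $M$ and to $N$, it suffices to produce a weak equivalence between the homotopy pushouts of
\[
M^2 \longleftarrow (ST(M) \times \OF_2(X)) \textstyle\bigcup_{ST(M)} DT(M) \longrightarrow DT(M) \times \OF_2(X)
\]
and the analogous span for $N$. Since a homotopy pushout is invariant under weak equivalences of spans, I would build a commutative ladder connecting the two spans in which every vertical map is a weak equivalence.

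First I would handle the easy corners: $DT(M) \to M$ is a homotopy equivalence, so $DT(M) \times \OF_2(X) \simeq M \times \OF_2(X)$, and the map to $M^2$ is, up to this equivalence, the composite $M \times \OF_2(X) \to M \stackrel{\Delta}{\to} M^2$... more precisely the projection $DT(M)_\epsilon \to M$ followed by the diagonal-type inclusion into $M^2$; in any case all of this is functorial in $M$ up to homotopy, so $f$ and $f \times f$ induce compatible equivalences on the $M^2$ and $DT(M) \times \OF_2(X)$ corners. The substantive point is the middle term $(ST(M) \times \OF_2(X)) \cup_{ST(M)} DT(M)$. Here $ST(M) \hookrightarrow DT(M)$ is, fiberwise over $M$, the inclusion of the spherical tangent bundle into the disk tangent bundle; the key input is the fiber homotopy invariance of the spherical tangent bundle (\cite{STM, Du}): a homotopy equivalence $f \colon M \simeq N$ induces a fiber homotopy equivalence between $ST(M) \to M$ and $ST(N) \to N$ covering (a map homotopic to) $f$. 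This fiber homotopy equivalence, together with the homotopy equivalence $DT(M) \simeq M \simeq N \simeq DT(N)$ and the point inclusion $q \colon \ast \to \OF_2(X)$ being the same on both sides, gives a weak equivalence of the homotopy pushouts defining the middle terms — one should phrase the middle term itself as a homotopy pushout $ST(M) \times \OF_2(X) \leftarrow ST(M) \times \ast \to DT(M) \times \ast$ and invoke invariance of homotopy pushouts once more.

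I would then assemble these three corner equivalences into a single weak equivalence of spans, checking that the squares commute up to homotopy — which they do, because the maps away from $\OF_2(X)$ and from the point $q$ are natural, and the only non-formal identification, on the $ST$-part, is exactly the one supplied by \cite{STM, Du}. Invariance of the homotopy pushout under levelwise weak equivalences of (homotopy-commutative, cofibrant-enough) spans then yields the weak equivalence $\F_2^X(M \times X) \simeq \F_2^X(N \times X)$. Finally, the hypothesis $\F_2(X) \neq \varnothing$ is used precisely to make Proposition \ref{htpy} applicable (and to ensure $\OF_2(X) \neq \varnothing$ so that a point $q$ exists), and $\widetilde X$ weakly contractible is what licenses the identification of the third leg with $M^2$.

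The main obstacle I anticipate is purely bookkeeping: the middle term is a pushout glued along the \emph{non-equivariant} inclusion $j \colon ST(M) \hookrightarrow F_2(M)_\epsilon \simeq ST(M)$ and along $q$, so one must be careful that the fiber homotopy equivalence from \cite{STM, Du} is compatible with the gluing to $DT(M)$ — i.e., that it extends over the disk bundle — which it does because a fiber homotopy equivalence of sphere bundles coning off to the disk bundles, or equivalently because the relevant inclusion $ST(M) \hookrightarrow DT(M)$ is, up to homotopy over $M$, the same datum on both sides. Making the homotopy-commutativity of the connecting ladder precise (rather than strict) and citing the correct form of homotopy-pushout invariance (e.g., \cite[Appendix]{BV}) is the only place where care is genuinely needed; everything else is formal.
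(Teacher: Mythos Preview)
Your proposal is correct and follows essentially the same approach as the paper's own proof: reduce to the homotopy pushout description in Proposition~\ref{htpy}, invoke the fiber homotopy invariance of the spherical tangent bundle \cite{STM, Du} to handle the $ST(M)$-piece, identify $DT(M)$ with $M$ and its inclusion into $M\times M$ with the diagonal, and then conclude by invariance of homotopy pushouts under levelwise weak equivalences. The paper's argument is terser but the content is the same; your explicit treatment of the middle term as an inner homotopy pushout and your remarks on the bookkeeping are exactly the care the paper leaves implicit.
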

\begin{proof}
By the fiber homotopy invariance of the spherical tangent bundle \cite{STM, Du}, there is a homotopy commutative square 
\[
 \xymatrix{
 ST(M) \ar[r]^{\simeq} \ar[d] & ST(N) \ar[d] \\
 M \ar[r]^{\simeq} & N
 }
\]
where the vertical maps are (any of) the (two homotopic) projections and the horizontal maps are homotopy equivalences. The projection $DT(M) \to M$ is a homotopy equivalence and,  under this identification, the inclusion $DT(M) \subset M \times M$ is homotopic to the diagonal inclusion $\Delta: M \to M \times M$. Thus, the homotopy pushout decomposition in Proposition \ref{htpy} is weakly homotopy invariant in $M$ and the result follows.
\end{proof}

\begin{corollary} 
Let $X$, $M$ and $N$ be as in Theorem \ref{main}. Suppose that $\pi_1(X)$ is finite. Then there is 
a zig-zag of maps connecting $F_2(M \times X)$ and $F_2(N \times X)$ and inducing isomorphisms in 
rational homology.
\end{corollary}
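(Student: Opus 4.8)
Write $G = \pi_1(X) \times \pi_1(X)$, so that $G$ is finite. The plan is to deduce the statement from Theorem~\ref{main} by descending along the covering $\F_2^X(M \times X) \to \F_2(M \times X)$, which is regular with deck group $G$ acting freely. Since the $G$-action is free, the Borel construction gives a weak equivalence
\[
\bigl(\F_2^X(M \times X)\bigr)_{hG} := EG \times_G \F_2^X(M \times X) \;\xrightarrow{\ \simeq\ }\; \F_2^X(M \times X)/G = \F_2(M \times X),
\]
so it is enough to analyse $\bigl(\F_2^X(M \times X)\bigr)_{hG}$ through a zig-zag of natural maps.

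I would begin by applying $(-)_{hG}$ to the $G$-equivariant homotopy pushout square of Corollary~\ref{eq-htpy}: all of its vertices carry a free $G$-action (the actions on $\OF_2(X)$, $\widetilde X^2$ and $G$ are free, while $G$ acts trivially on $F_2(M)_\epsilon$, $DT(M)$ and $\F_2(M)$), and the square is a homotopy pushout, as the proof of Proposition~\ref{htpy} shows. Homotopy orbits preserve homotopy pushouts; moreover $(\OF_2(X))_{hG} \simeq \OF_2(X)/G = \F_2(X)$ and $(\widetilde X^2)_{hG} \simeq \widetilde X^2/G = X^2$, while $(Z \times G)_{hG} \simeq Z$ and $(Z \times W)_{hG} \simeq Z \times W_{hG}$ whenever $G$ acts trivially on $Z$. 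Taking homotopy orbits of the square, using $F_2(M)_\epsilon \simeq ST(M)$ and $DT(M) \simeq M$, and noting that the maps $G \to \OF_2(X)$ and $G \to \widetilde X^2$ through which the gluings are performed become nullhomotopic, one obtains that $\F_2(M \times X)$ is weakly equivalent to the homotopy pushout of
\[
\bigl(\F_2(M) \times X^2\bigr) \cup_{ST(M)} DT(M) \;\longleftarrow\; \bigl(ST(M) \times \F_2(X)\bigr) \cup_{ST(M)} DT(M) \;\longrightarrow\; M \times \F_2(X),
\]
where the gluings and the maps of the span are induced by the inclusions $ST(M) \subset DT(M)$, $F_2(M)_\epsilon \hookrightarrow \F_2(M)$, $\F_2(X) \hookrightarrow X^2$, the spherical tangent bundle projection $ST(M) \to M$, and the choice of a basepoint in $\F_2(X)$, resp.\ in $X^2$.

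The right and middle corners, together with the maps between them, are assembled entirely from $M$, the spherical tangent bundle projection $ST(M) \to M$ and the space $\F_2(X)$, hence are homotopy invariants of $M$ by the fiber homotopy invariance of the spherical tangent bundle \cite{STM, Du}. The left corner involves $\F_2(M)$, which is not a homotopy invariant of $M$ -- this is the main obstacle. To circumvent it I would use the hypothesis that $\pi_1(X)$ is finite: then $X \simeq B\pi_1(X)$ is rationally acyclic, so $\widetilde H_*(X^2; \mathbb{Q}) = 0$, and Mayer--Vietoris shows that the map projecting $X^2$ away,
\[
\bigl(\F_2(M) \times X^2\bigr) \cup_{ST(M)} DT(M) \;\longrightarrow\; \F_2(M) \cup_{ST(M)} DT(M),
\]
is a rational homology equivalence. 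Its target is precisely the homotopy pushout~\eqref{decomp_MxM}, so it is $M \times M$. Replacing the left corner by $M \times M$ along this rational homology equivalence of spans (keeping the other two corners fixed) produces a space $\mathbb{G}_M$ and a rational homology equivalence $\F_2(M \times X) \xrightarrow{\,\sim_\mathbb{Q}\,} \mathbb{G}_M$, where $\mathbb{G}_M$ is the homotopy pushout of
\[
M \times M \;\longleftarrow\; \bigl(ST(M) \times \F_2(X)\bigr) \cup_{ST(M)} DT(M) \;\longrightarrow\; M \times \F_2(X);
\]
every space and every map here depends only on the homotopy type of $M$, the diagonal $M \to M \times M$, the fiber homotopy type of $ST(M) \to M$ and $\F_2(X)$.

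To finish, a homotopy equivalence $M \simeq N$ together with a compatible homotopy equivalence $ST(M) \simeq ST(N)$ lying over it, supplied by \cite{STM, Du}, induces a zig-zag of weak equivalences between the diagrams defining $\mathbb{G}_M$ and $\mathbb{G}_N$, hence $\mathbb{G}_M \simeq \mathbb{G}_N$. Splicing this with the rational homology equivalences $\F_2(M \times X) \to \mathbb{G}_M$ and $\F_2(N \times X) \to \mathbb{G}_N$ yields the desired zig-zag of maps inducing isomorphisms on rational homology. The remaining points are routine: realizing every step by honest maps of spaces (using double mapping cylinders throughout) and checking that the various squares are homotopy pushouts of spaces with free $G$-action. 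The one genuinely non-formal ingredient is the identification of the left corner after taking homotopy orbits, which combines the homotopy pushout~\eqref{decomp_MxM} with the rational acyclicity of $X$, and it is exactly there that the finiteness of $\pi_1(X)$ is used.
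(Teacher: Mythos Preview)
Your argument is correct, but the paper's proof is dramatically simpler and uses the finiteness of $\pi_1(X)$ in a more direct way. The paper observes that the covering maps
\[
\F_2(M \times X) \longleftarrow \F_2^X(M \times X) \ \simeq_w \ \F_2^X(N \times X) \longrightarrow \F_2(N \times X)
\]
are finite (with deck group $G$) and induce bijections on $\pi_0$; the standard transfer argument then shows they are rational homology isomorphisms, and the middle weak equivalence is supplied directly by Theorem~\ref{main}. That is the entire proof.

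By contrast, you rerun the whole machinery of Corollary~\ref{eq-htpy} and Proposition~\ref{htpy} after applying $(-)_{hG}$, obtain a homotopy pushout model for $\F_2(M \times X)$, and then exploit finiteness of $\pi_1(X)$ through the rational acyclicity of $X \simeq B\pi_1(X)$ to collapse the corner involving $\F_2(M) \times X^2$ down to $M \times M$. This works, and it has the mild conceptual interest of producing an explicit rational model $\mathbb{G}_M$ for $\F_2(M \times X)$ built only from $M$, the diagonal, the spherical tangent bundle, and $\F_2(X)$. But it duplicates effort already encapsulated in Theorem~\ref{main}: once you know $\F_2^X(M \times X) \simeq_w \F_2^X(N \times X)$, the passage to $\F_2(M \times X)$ via the finite cover is a one-line transfer argument, and there is no need to revisit the pushout decomposition or invoke the fiber homotopy invariance of $ST(M)$ a second time.
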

\begin{proof}
The claim is obvious when $M$ and $N$ are $0$-dimensional. If the dimension is positive, the zig-zag of maps is as follows
$$\F_2(M \times X) \leftarrow \F_2^X(M \times X) \simeq_w \F_2^X(N \times X) \rightarrow \F_2(N \times X)$$
where in the middle is the weak homotopy equivalence from Theorem \ref{main} and the other two maps are the natural projections. 
These two maps are finite covering maps and it is easy to check that they induce bijections on $\pi_0$. Therefore they induce isomorphisms
between the rational homology groups. 
\end{proof}

\begin{corollary} \label{cor}
Let $M$ and $N$ be homotopy equivalent closed smooth manifolds.
\begin{itemize} 
 \item[(a)] Suppose that $X$ is weakly contractible and $\F_2(X) \neq \varnothing$. Then $\F_2(M \times X)$ and $\F_2(N \times X)$
 are weakly homotopy equivalent.
 \item[(b)] $\F_2^{S^1}(M \times S^1)$ and $\F_2^{S^1}(N \times S^1)$ are homotopy equivalent.
\end{itemize}
\end{corollary}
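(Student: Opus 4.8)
The plan is to read off both statements from Theorem \ref{main}; all that is needed is to verify its hypotheses in the two cases, to identify the covering space $\F_2^X(M \times X)$ with $\F_2(M \times X)$ in part (a), and to promote a weak homotopy equivalence to an honest one in part (b).

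For part (a), I would first note that a weakly contractible space $X$ is in particular simply connected, so the group $G = \pi_1(X,x) \times \pi_1(X,x)$ is trivial and the universal covering $p \colon \widetilde X \to X$ is a homeomorphism (any connected covering of a simply connected space is a homeomorphism). Consequently, from the defining formula of the $X$-orbit configuration space, the principal $G$-covering $\F_2^X(M \times X) \to \F_2(M \times X)$ is a homeomorphism, that is, $\F_2^X(M \times X) = \F_2(M \times X)$, and likewise with $N$ in place of $M$. Now $\widetilde X \cong X$ is weakly contractible and $\F_2(X) \neq \varnothing$ by hypothesis, so Theorem \ref{main} yields a weak homotopy equivalence $\F_2^X(M \times X) \simeq_w \F_2^X(N \times X)$; under the identifications just made, this is the desired weak homotopy equivalence $\F_2(M \times X) \simeq_w \F_2(N \times X)$.

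For part (b), I would apply Theorem \ref{main} with $X = S^1$: its universal covering space $\R$ is contractible, hence weakly contractible, and $\F_2(S^1) \simeq S^1$ is non-empty. The theorem then gives a zig-zag of weak homotopy equivalences between $\F_2^{S^1}(M \times S^1)$ and $\F_2^{S^1}(N \times S^1)$. To upgrade this to a genuine homotopy equivalence, I would observe that $\F_2(M \times S^1)$ is an open subset of the closed smooth manifold $(M \times S^1)^2$, hence itself a smooth manifold, so that its covering space $\F_2^{S^1}(M \times S^1)$ is again a smooth manifold; in particular it has the homotopy type of a CW complex, and the same holds with $N$ in place of $M$. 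Since a zig-zag of weak homotopy equivalences between spaces that have the homotopy type of CW complexes is realized by a homotopy equivalence (by CW approximation and Whitehead's theorem), we conclude that $\F_2^{S^1}(M \times S^1) \simeq \F_2^{S^1}(N \times S^1)$.

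I do not expect any serious obstacle here, as the mathematical substance is entirely contained in Theorem \ref{main}. The two points that deserve a little care are the observation in part (a) that weak contractibility of $X$ simultaneously trivializes the deck group $G$ and the covering $p$, so that the $X$-orbit configuration space coincides with the ordinary configuration space, and the routine verification in part (b) that configuration spaces of closed smooth manifolds, and their covering spaces, are again smooth manifolds — which is precisely what licenses the passage from a weak homotopy equivalence to a homotopy equivalence.
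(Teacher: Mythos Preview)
Your proposal is correct and matches the paper's intent: the paper gives no proof for this corollary, treating both parts as immediate consequences of Theorem \ref{main}, and you have spelled out exactly the details one would supply --- the identification $\F_2^X(M\times X)=\F_2(M\times X)$ when $X$ is weakly contractible (so $G$ is trivial and $p$ is a homeomorphism), and the upgrade from weak to genuine homotopy equivalence in (b) via the CW homotopy type of manifolds.
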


\begin{corollary}
The spaces $\F_2^{S^1}(L_{7,1} \times S^1)$ and $\F_2^{S^1}(L_{7,2} \times S^1)$ are homotopy equivalent. 
\end{corollary}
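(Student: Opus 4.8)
The plan is to obtain the statement as the special case $M = L_{7,1}$, $N = L_{7,2}$ of Corollary \ref{cor}(b). The only input needed beyond the general machinery already developed is the classical fact, recalled in the Introduction and originally exploited in \cite{LS}, that $L_{7,1}$ and $L_{7,2}$ are homotopy equivalent closed smooth $3$-manifolds which are not homeomorphic: the homotopy equivalence follows from the homotopy classification of lens spaces together with the congruence $2 \equiv 3^2 \pmod 7$, while Reidemeister torsion obstructs a homeomorphism. Since the universal covering $\R \to S^1$ is weakly contractible and $\F_2(S^1) \neq \varnothing$, the hypotheses of Theorem \ref{main}, and hence of Corollary \ref{cor}(b), are satisfied, and applying Corollary \ref{cor}(b) yields the desired homotopy equivalence $\F_2^{S^1}(L_{7,1} \times S^1) \simeq \F_2^{S^1}(L_{7,2} \times S^1)$.

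I expect no genuine obstacle here: the substance of the assertion is entirely carried by Theorem \ref{main}, and the pair $(L_{7,1}, L_{7,2})$ serves only to pin down a concrete instance of non-homeomorphic but homotopy equivalent manifolds, precisely the counterexample of \cite{LS} to homotopy invariance of $\F_2(-)$. The one point that deserves a remark is that Corollary \ref{cor}(b) promises an honest homotopy equivalence rather than the a priori weaker conclusion of Theorem \ref{main}; this is justified because $\F_2^{S^1}(M \times S^1)$ is an open subspace of the smooth manifold $(M \times \R)^2$ and therefore has the homotopy type of a CW complex, so Whitehead's theorem upgrades the weak equivalence to a homotopy equivalence, and likewise for $N$.
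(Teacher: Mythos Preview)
Your proposal is correct and is exactly the intended argument: the corollary is an immediate application of Corollary~\ref{cor}(b) to the homotopy equivalent lens spaces $L_{7,1}$ and $L_{7,2}$, and indeed the paper states it without proof for this reason. Your added remark justifying the upgrade from weak to genuine homotopy equivalence via the CW structure is a welcome clarification.
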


Since $L_{7,1}$ and $L_{7,2}$ are not homeomorphic, the spaces $L_{7,1} \times S^1$ and $L_{7,2} \times S^1$ are also not homeomorphic by results of 
\cite{KR} (see, e.g., the proof in \cite[p. 177]{KR}). However, they are simple-homotopy equivalent because the Whitehead torsion of $f \times \mathrm{id}_{S^1}$ 
vanishes for every homotopy equivalence $f$. In \cite{LS}, it was shown that the orbit configuration spaces $\widetilde{\F}_2(L_{7,1})$ and $\widetilde{\F}_2(L_{7,2})$ are not homotopy 
equivalent, thus disproving the homotopy invariance of configuration spaces. It remains open whether the configuration spaces $F_2(L_{7,1} \times S^1)$ and $F_2(L_{7,2} \times S^1)$ 
are homotopy equivalent and whether, more generally, the correspondence $M \mapsto F_2(M \times S^1)$ is homotopy invariant. Based on the properties of the 
Whitehead torsion, this problem relates to the general question about the homotopy invariance of configuration spaces with respect to simple-homotopy equivalences.


\begin{thebibliography}{10}

\bibitem{STM}
Benlian, Ren\'{e}; Wagoner, John, \textit{Type d'homotopie fibr\'{e} et r\'{e}duction structurale des fibr\'{e}s vectoriels},
C. R. Acad. Sci. Paris S\'{e}r. A-B 265 (1967) A207-A209. 

\bibitem{BV}
Boardman, J. M.; Vogt, R. M., \textit{Homotopy invariant algebraic structures on topological spaces,}
Lecture Notes in Mathematics, Vol. 347. Springer-Verlag, Berlin-New York, 1973.

\bibitem{DI}
Dugger, Daniel; Isaksen, Daniel C., \textit{Topological hypercovers and $\mathbb{A}^1$-realizations},
Math. Z. 246 (2004), no. 4, 667-689. 

\bibitem{Du}
Dupont, Johan L., \textit{On homotopy invariance of the tangent bundle. I, II},
Math. Scand. 26 (1970), 5-13; ibid. 26 (1970) 200-220. 

\bibitem{Lee} 
Evans-Lee, Kyle, \textit{On configuration spaces of lens spaces}, Ph.D. Thesis, University of Miami, 2015.

\bibitem{KR}
Kwasik, S\l{}awomir; Rosicki, Witold, \textit{On stability of 3-manifolds}, Fund. Math. 182 (2004), no. 2, 169-180. 

\bibitem{Le}
Levitt, Norman, \textit{Spaces of arcs and configuration spaces of manifolds}, Topology 34 (1995), no. 1, 217-230. 

\bibitem{LS}
Longoni, Riccardo; Salvatore, Paolo,  \textit{Configuration spaces are not homotopy invariant}, Topology 44 (2005), no. 2, 375-380. 


\end{thebibliography}
\end{document}